\newtheorem{thm}{Theorem}[section] 
\newtheorem{cor}[thm]{Corollary}
\newtheorem{lem}[thm]{Lemma}
\newtheorem{prop}[thm]{Proposition}
\newtheorem{ques}[thm]{Question}
\theoremstyle{definition}
\newtheorem{exmpl}[thm]{Example}
\newcommand\norm{{\operatorname{Norm}}}
\newcommand{\vectwo}[2]{
	\begin{pmatrix}
		#1 \\
		#2
	\end{pmatrix}
}
\title{%
Roots and right factors of polynomials and left eigenvalues of matrices over Cayley--Dickson algebras    
    }
\author{%
    Adam Chapman and Solomon Vishkautsan
    }
\abstract{%
Over a composition algebra $A$, a polynomial $f(x) \in A[x]$ has a root $\alpha$ if and only $f(x)=g(x)\cdot (x-\alpha)$ for some $g(x) \in A[x]$. We examine whether this is true for general Cayley--Dickson algebras. The conclusion is that it is when $f(x)$ is linear or monic quadratic, but it is false in general. Similar questions about the connections between $f$ and its companion $C_f(x)=f(x)\cdot \overline{f(x)}$ are studied. Finally, we compute the left eigenvalues of $2\times 2$ octonion matrices.
    }
\keywords{%
Cayley--Dickson Algebras; Polynomials; Roots; Decomposition
    }
\begin{document}

\section{Introduction}

Given an algebra $A$ (in this article an algebra is unital but is not necessarily commutative or associative) with involution $\tau$ over a field $F$, and a choice of $\gamma \in F^\times$, the Cayley--Dickson double $B=A\{\gamma\}$ of $A$ is defined to be $A\oplus A\ell$ with the product 
$$(q+r\ell)(s+t\ell)=qs+\gamma \overline{t}r+(tq+r\overline{s})\ell$$
for any $q,r,s,t \in A$, and where $\overline{x}=\tau(x)$ for any $x\in{A}$. The involution $\tau$ 
extends to $B$ by defining $\overline{q+r\ell}=\overline{q}-r\ell$.
A Cayley--Dickson algebra is obtained by repeating this process several times, starting with a quadratic separable extension $K$ of $F$, with $\tau$ being the unique non-trivial automorphism of order 2 of $K$ acting trivially on $F$. We denote such an algebra $A$ by $K\{\gamma_2,\dots,\gamma_m\}$ where $\gamma_2,\dots,\gamma_m$ are the chosen elements of $F^\times$, in this order. 
When $\operatorname{char}(F)\neq 2$, $K$ is actually $F\{\gamma_1\}$ for the right choice of $\gamma_1 \in F^\times$, and thus $A=F\{\gamma_1,\gamma_2,\dots,\gamma_m\}$.
Such algebras (regardless of the characteristic of the base field) are endowed with a linear trace form $\operatorname{Tr}(q)=q+\overline{q}$ and a quadratic norm form $\norm(q)=q\cdot \overline{q}$ satisfying $q^2-\operatorname{Tr}(q)q+\norm(q)=0$ for any $q \in A$.

When $\dim A=4$, $A$ is called a quaternion $F$-algebra, and when $\dim A=8$ an octonion algebra, and when $\dim A=16$ a sedenion algebra.
Special attention is given in the literature to the Cayley--Dickson algebras constructed over the real numbers by repeatedly choosing $\gamma=-1$, i.e., $\mathbb{R}\{-1,-1,\dots,-1\}$. The first four of those are $\mathbb{C}$, $\mathbb{H}$, $\mathbb{O}$ and $\mathbb{S}$ (resp.\ the complex field, Hamilton's quaternions, the real octonion division algebra and the real sedenion algebra).
These algebras can be described as real Cayley--Dickson algebras with anisotropic norm form, or as locally-complex Cayley--Dickson algebras (a real unital algebra is called locally-complex if any nonscalar element generates a subalgebra isomorphic to $\mathbb{C}$; c.f.\ \cite{brevsar2011locally}). 

The norm form of a Cayley--Dickson algebra $A$ is multiplicative as long as $\dim A \leq 8$. When $\dim A \geq 16$, it stops being multiplicative. For example, when $A=\mathbb{S}$ with its standard basis $e_0,e_1,\dots,e_{15}$ (for the definition of the standard basis in terms of the generators of $\mathbb{S}$ and its multiplication table, see for example \cite[Section 2.2]{CGVZ} and \cite[Section 2]{Cawagas:2004}), the elements $\alpha=e_1+e_{10}$ and $\beta=e_7+e_{12}$ are of norm $2$, so having a multiplicative norm form would mean $\norm(\alpha \beta)=4$, but in fact $\alpha \beta=0$.

In this article we address several questions that came up in the writing of the articles \cite{CGVZ, ChapmanVishkautsan:2022}. In Section~\ref{sec:roots} we discuss the decomposition of the form $f(x)=g(x)\cdot (x-\lambda)$ for roots $\lambda$ of polynomials with coefficients in a Cayley--Dickson algebra. We show that such a decomposition exists for linear and monic quadratic polynomials but does not exist in general for Cayley--Dickson algebras of dimension $\ge 16$.  We also show that in the latter algebras there exist polynomials which have no roots. Finally, we give some counterexamples to questions regarding roots and critical points of Cayley--Dickson polynomials. 

In Section~\ref{sec:eigen} we prove that every octonion matrix has a left eigenvalue, and provide a method for their computation. This provides a direct generalization of Huang and So's results for quaternion matrices in \cite{HuangSo:2001}.

\section{Polynomials over Cayley--Dickson Algebras}\label{sec:roots}

Given a Cayley--Dickson algebra $A$ over $F$, we define the polynomial algebra $A[x]$ to be $A \otimes_F F[x]$; i.e., the indeterminate $x$ behaves like a central element, and in particular, every commutator or associator involving $x$ is trivial.
Every polynomial $f(x) \in A[x]$ can thus be written as $f(x)=c_n x^n+\dots+c_1 x+c_0$ for some $c_0,c_1,\dots,c_n \in A$.
The substitution of $\lambda \in A$ in $f(x)$ is defined by $f(\lambda)=c_n (\lambda^n)+\dots+c_1 \lambda+c_0$. This expression is well-defined as Cayley--Dickson algebras are always power-associative, see \cite{Schafer:1954}.
We say that $\lambda \in A$ is a root of $f(x)\in A[x]$ if $f(\lambda)=0$.

For $f(x)\in A[x]$, the  companion polynomial of $f(x)$ is defined by  $$C_f(x)=\norm(f(x))=f(x)\cdot \overline{f(x)} \in F[x],$$ where $\overline{f(x)}=\overline{c_n} x^n+\dots+\overline{c_1} x+\overline{c_0}$.
It is known that when $\dim A \leq 8$, every root of $f(x)$ is a root of $C_f(x)$, and every root of $C_f(x)$ has an element quadratically equivalent to it, i.e., an element with the same trace and norm, which is a root of $f(x)$; see \cite{Chapman:2020b}.

\subsection{Roots and linear factors of Cayley--Dickson Polynomials}

It is known that when $A$ is a Cayley--Dickson algebra of $\dim A \leq 8$, then $\lambda\in{A}$ is a root of $f(x) \in A[x]$ if and only if $x-\lambda$ is a right factor of $f(x)$, i.e., $f(x)=g(x)\cdot (x-\lambda)$ for some $g(x) \in A[x]$, see \cite{Chapman:2020b}.

\begin{ques}\label{RightFactorQ}
Given a polynomial $f(x)$ over a Cayley--Dickson algebra $A$ and $\lambda \in A$, is it true that $\lambda$ is a root of $f(x)$ if and only if there exists $g(x)\in A[x]$ such that $$f(x)=g(x)\cdot (x-\lambda)?$$
\end{ques}

We begin by settling Question \ref{RightFactorQ} for linear and monic quadratic polynomials:

\begin{prop}\label{linear}
	When $f(x) \in A[x]$ is either linear or monic quadratic, an element $\lambda \in A$ is a root of $f(x)$ if and only if $f(x)=g(x)\cdot (x-\lambda)$ for some $g(x)\in A[x]$.
\end{prop}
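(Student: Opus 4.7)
The plan is to verify both directions of the equivalence by direct computation, taking advantage of the fact that in the low-degree cases the relevant associators never go beyond $\lambda$ and $\lambda^2$, so only power-associativity (which Cayley--Dickson algebras always have) and distributivity are needed, avoiding any reliance on alternativity.

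For the linear case, write $f(x)=cx+d$ with $c\in A$ (and $c\neq 0$ if $f$ is genuinely linear; the constant subcase is trivial, since a constant $d$ has a root iff $d=0$). If $\lambda$ is a root then $c\lambda+d=0$, so $d=-c\lambda$ and hence $f(x)=cx-c\lambda=c\cdot(x-\lambda)$, giving $g(x)=c$. Conversely, if $f(x)=g(x)(x-\lambda)$ with $\deg f\le 1$, degree considerations force $g(x)=c$ to be constant, and substituting $\lambda$ yields $f(\lambda)=c\lambda-c\lambda=0$.

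For the monic quadratic case, write $f(x)=x^2+bx+c$. Degree matching forces any candidate $g(x)$ to be monic linear, say $g(x)=x+a$. Expanding using only distributivity and the centrality of $x$, I obtain
\begin{equation*}
g(x)(x-\lambda)=x^2+(a-\lambda)x-a\lambda.
\end{equation*}
Matching coefficients with $f(x)$ gives $a=b+\lambda$ and $c=-a\lambda=-(b+\lambda)\lambda=-b\lambda-\lambda^2$. Thus a suitable $g$ exists if and only if $\lambda^2+b\lambda+c=0$, which is precisely the condition that $\lambda$ is a root of $f$. The converse direction, substituting $\lambda$ into $g(x)(x-\lambda)$ for this $g$, gives $\lambda^2+(a-\lambda)\lambda-a\lambda=\lambda^2+a\lambda-\lambda^2-a\lambda=0$, again via distributivity alone.

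There is no real obstacle here: the potential concern of associativity failing in general Cayley--Dickson algebras does not arise because every product in the expansion involves at most two distinct elements of $A$ (namely $a$ and $\lambda$, or $c$ and $\lambda$), and powers of a single element are handled by power-associativity. I expect to present the proof as two short paragraphs corresponding to the two cases, with the emphasis that the computation gives an explicit formula $g(x)=x+b+\lambda$ in the quadratic case, making the equivalence transparent.
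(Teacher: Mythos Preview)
Your proof is correct and takes essentially the same approach as the paper: both reduce the linear case to $c(x-\lambda)$ and the monic quadratic case to the explicit factor $g(x)=x+\lambda+b$ (the paper writes $x+\lambda+a$, using $a$ for the linear coefficient), verified by direct expansion. The only stylistic difference is that you match coefficients whereas the paper factors $x^2-\lambda^2+a(x-\lambda)$ directly, but the content is identical.
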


\begin{proof}
	When $f(x)$ is linear, $f(x)=ax+b$ for some $a,b \in A$. 
	If $\lambda$ is a root, then \[f(\lambda)=a\lambda+b=0, \] and so $b=-a\lambda$, which means $f(x)=ax-a\lambda=a(x-\lambda)$. In the opposite direction, when $(x-\lambda)$ is a right factor of $f(x)$, there exists $a \in A$ such that $f(x)=a(x-\lambda)=ax-a\lambda$, and then clearly $f(\lambda)=0$.
	
	When $f(x)$ is monic quadratic, $f(x)=x^2+ax+b$. If $\lambda$ is a root, then $f(\lambda)=\lambda^2+a\lambda+b=0$, and so $b=-\lambda^2-a\lambda$, which means $$f(x)=x^2+ax-\lambda^2-a\lambda=(x+\lambda)(x-\lambda)+a(x-\lambda)=(x+\lambda+a)(x-\lambda).$$
	In the opposite direction, if $x-\lambda$ is a right factor, then $f(x)=(x+c)(x-\lambda)$ for some $c \in A$, and so 
	$f(x)=x^2+cx-\lambda x-c\lambda$, which means $f(\lambda)=\lambda^2+c\lambda-\lambda^2-c\lambda=0$.
\end{proof}

We now provide negative answers to Question \ref{RightFactorQ} in general, both for the implication that having a root $\lambda$ means having a right factor $x-\lambda$, and its converse statement.

\begin{exmpl}\label{exmp1}
	Consider $A=\mathbb{S}$ with its standard basis $e_0,\dots,e_{15}$, and denote $\alpha=e_1+e_{10}$ and $\beta=e_7+e_{12}$. The elements $\alpha$ and $\beta$ are zero divisors, and we have $\alpha\beta=\beta\alpha=0$. We remark that  anisotropic Cayley--Dickson algebras over fields of characteristic different from $2$ are reversible, i.e. $\alpha\beta=0$ implies $\beta\alpha=0$; see \cite[Theorem 2.3]{darpo2021}. 
	
	Now let $f(x)=\frac{1}{2}\beta x^2+\beta$. Since $$f(\alpha)=\frac{1}{2}\beta \cdot (\alpha^2)+\beta=\frac{1}{2}\beta\cdot (-2)+\beta=0,$$ $\alpha$ is a root of $f(x)$.
	However, if $x-\alpha$ were a right factor of $f(x)$, there would exist $c\in A$ for which $$f(x)=(\frac{1}{2}\beta x+c)(x-\alpha) = \frac{1}{2}\beta x^2+cx-c\alpha,$$ implying both $c=0$ and $c\alpha=-\beta$, a contradiction.
\end{exmpl}

\begin{exmpl}
	Using the notations from the previous example, 
	consider $f(x)=\beta x^2+\beta x$. This polynomial decomposes as $f(x)=(\beta x+\beta)(x-\alpha)$, but $f(\alpha)=-2\beta$, so $\alpha$ is not a root of $f(x)$.
\end{exmpl}

The question of real Cayley--Dickson algebras being ``algebraically closed" is classic. In this context, ``algebraically closed" means that every polynomial of degree at least $1$ has a root in the algebra. This was proved in the wider setting of general real quaternion polynomials for polynomials with one leading monomial by Eilenberg and Niven (See \cite{eilenberg-niven1944}). This result was subsequently generalized to the real octonions by Jou~\cite{jou1950}, and to more complicated polynomials over any finite-dimensional real composition division algebra in \cite{wilczynski2014}. 
For the special case of left or one-sided polynomials over $\mathbb{O}$ as discussed in this article, we propose the following short proof. 

\begin{prop}\label{Oalgclosed}
	Every polynomial $f(x) \in \mathbb{O}[x]$ of degree at least 1 has a root in $\mathbb{O}$.
\end{prop}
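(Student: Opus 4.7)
My plan is to leverage the companion polynomial $C_f(x) = f(x)\overline{f(x)} \in \mathbb{R}[x]$ together with the correspondence between its roots and those of $f$ recalled at the start of the section (from \cite{Chapman:2020b}): in dimension $\leq 8$, every root of $C_f$ in $A$ is quadratically equivalent to some root of $f$ in $A$. So it suffices to exhibit a root of $C_f$ inside $\mathbb{O}$.

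First I would check that $\deg C_f > 0$. Writing $f(x) = c_n x^n + \cdots + c_0$ with $n \geq 1$, the leading coefficient of $C_f$ is $\norm(c_n)$; since $\mathbb{O}$ is a division algebra and $c_n \neq 0$, this is nonzero, so $\deg C_f = 2n \geq 2$. Next, by the fundamental theorem of algebra over $\mathbb{R}$, the polynomial $C_f$ factors in $\mathbb{R}[x]$ as a product of linear and irreducible quadratic polynomials. A real linear factor $x - r$ immediately provides a root $r \in \mathbb{R} \subset \mathbb{O}$. For an irreducible quadratic factor $x^2 - sx + n$ (with $s^2 < 4n$), the element $\mu = \tfrac{s}{2} + \sqrt{n - s^2/4}\,e_1 \in \mathbb{O}$ satisfies $\operatorname{Tr}(\mu) = s$ and $\norm(\mu) = n$, so by the quadratic identity $\mu^2 - \operatorname{Tr}(\mu)\mu + \norm(\mu) = 0$ recalled in the introduction, $\mu$ is a root of $x^2 - sx + n$ and hence of $C_f$.

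Applying the cited result then produces $\lambda \in \mathbb{O}$ with $\operatorname{Tr}(\lambda) = \operatorname{Tr}(\mu)$, $\norm(\lambda) = \norm(\mu)$, and $f(\lambda) = 0$, completing the proof. The only genuine subtlety is realising a root of $C_f$ in $\mathbb{O}$ rather than merely in $\mathbb{C}$; this is handled by embedding each irreducible real quadratic factor into $\mathbb{O}$ via an imaginary unit such as $e_1$, exploiting the fact that every element of $\mathbb{O}$ with prescribed real trace and norm satisfies the corresponding real quadratic.
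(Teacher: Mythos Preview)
Your proposal is correct and follows essentially the same approach as the paper: form the companion polynomial $C_f \in \mathbb{R}[x]$, find a root of it inside $\mathbb{O}$, and then invoke the quadratic-equivalence correspondence between roots of $C_f$ and roots of $f$. The only cosmetic difference is that the paper simply notes $\mathbb{C} \hookrightarrow \mathbb{O}$ to place a complex root of $C_f$ inside $\mathbb{O}$, whereas you write out the embedding explicitly via $e_1$; your additional verification that $\deg C_f = 2n$ (using anisotropy of the norm on $\mathbb{O}$) is a useful detail the paper leaves implicit.
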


\begin{proof}
	The companion polynomial of $f(x)$ is  defined by  $C_f(x)=\norm(f(x))=f(x)\cdot \overline{f(x)}$. It is a polynomial with \emph{real} coefficients of degree at least $2$, and thus it admits a complex root $\alpha$. Since the complex numbers embed into $\mathbb{O}$, we can assume that $\alpha\in\mathbb{O}$. By \cite[Theorem 3.4]{Chapman:2020a}, the root $\alpha$ of $C_f(x)$ is quadratically equivalent to some root of $f(x)$ in $\mathbb{O}$ (i.e., there exists a root of $f(x)$ with the same trace and norm as $\alpha$).
\end{proof}

\begin{cor} \label{cor:full-decomp}
	Every polynomial $f(x) \in \mathbb{O}[x]$ decomposes as a product of linear factors of the form 
	\begin{equation*}
	f(x) = ((...(c(x - \lambda_n )) \ldots (x-\lambda_3))(x-\lambda_2))(x-\lambda_1).
	\end{equation*}
\end{cor}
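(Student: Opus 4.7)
The plan is to prove the statement by induction on $\deg f$. The base case $\deg f = 0$ is immediate: then $f(x) = c$ for some $c \in \mathbb{O}$, and the decomposition holds trivially with no linear factors multiplied. For the inductive step, I will combine Proposition \ref{Oalgclosed} with the fact from \cite{Chapman:2020b}, recalled at the beginning of Section \ref{sec:roots}, that over a Cayley--Dickson algebra $A$ of dimension $\leq 8$ an element $\lambda \in A$ is a root of $h(x) \in A[x]$ if and only if $x - \lambda$ is a right factor of $h(x)$.

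Suppose $\deg f = n \geq 1$ and that the claim holds for all polynomials of smaller degree. Proposition \ref{Oalgclosed} supplies a root $\lambda_1 \in \mathbb{O}$ of $f$. Since $\mathbb{O}$ is an octonion algebra (dimension $8$), the cited equivalence produces some $g(x) \in \mathbb{O}[x]$ with $f(x) = g(x) \cdot (x - \lambda_1)$. Expanding the product while exploiting the centrality of $x$ in $\mathbb{O}[x]$ shows that the leading coefficient of $g$ coincides with that of $f$ and that $\deg g = n - 1$. The inductive hypothesis then yields
$$g(x) = ((\dots(c(x - \lambda_n))\dots)(x - \lambda_3))(x - \lambda_2),$$
and right-multiplying by $(x - \lambda_1)$ gives exactly the bracketing in the statement.

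There is no substantial obstacle once the two ingredients above are in place. The only delicate point is the non-associativity of $\mathbb{O}$: a decomposition into linear right-factors only makes sense once a bracketing is fixed. The induction naturally produces the \emph{left-associated} bracketing of the corollary, because each newly extracted factor $(x - \lambda_k)$ is multiplied on the right of everything that has been accumulated so far; rebracketing the product would in general alter its value, so the specific order of factors in the statement is essential rather than cosmetic.
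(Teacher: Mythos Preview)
Your proof is correct and follows exactly the route the paper indicates: induction on the degree, using Proposition~\ref{Oalgclosed} to find a root and the root/right-factor correspondence from \cite{Chapman:2020b} to peel off $(x-\lambda_1)$. The paper's proof is a one-line sketch citing precisely these two ingredients, so your argument is simply a fleshed-out version of it.
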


\begin{proof}
	This follows from the former proposition and the correspondence between linear right factors and roots of octonion polynomials described at the beginning of the section and proven in \cite{Chapman:2020b}.
\end{proof}

We remark that the $\lambda_i$ for $2\le i \le n$ in Corollary~\ref{cor:full-decomp} are not necessarily roots of $f(x)$.
A natural question to ask is whether this type of decomposition extends to higher dimensional Cayley--Dickson algebras.

\begin{ques}\label{AlgebraicallyClosedQ}
Are all real Cayley--Dickson algebras with anisotropic norm form algebraically closed?
\end{ques}

The answer to Question \ref{AlgebraicallyClosedQ} is evidently negative, as real anisotropic Cayley--Dickson algebras of dimension $\ge 16$ contain zero divisors.
Each such zero divisor $\alpha$ defines a linear map $q\mapsto \alpha q$ on $A$ which is not surjective, implying that there exists some $\beta\in{A}$ such that there is no solution in $A$ to the equation $\alpha x = \beta$. This proves that the linear polynomial $f(x)=\alpha x-\beta$ has no roots in $A$. We now provide a concrete counterexample in $\mathbb{S}$:

\begin{exmpl}\label{Octave}
Let $\alpha$ and $\beta$ be the elements of $A=\mathbb{S}$ from Example~\ref{exmp1}, and let $f(x)=\alpha{x}-\beta$. We show that this linear polynomial has no roots in $\mathbb{S}$. 

Consider the real-valued symmetric bilinear form $\langle a, b \rangle$ associated with the quadratic form $\norm(a)$ on an arbitrary Cayley--Dickson algebra $A$ over $\mathbb{R}$, with $\langle a, a \rangle = \norm(a)$ for all $a \in A$. The bilinear form satisfies the identities $\langle a, bc \rangle = \langle a \bar{c}, b \rangle = \langle \bar{b}a, c \rangle$ for all $a,b,c\in A$ (see \cite[Lemma 6]{Schafer:1954}). Consequently, if $\alpha{x}=\beta$, it follows that $$2=\langle \beta, \beta \rangle = \langle \alpha{x}, \beta \rangle = -\langle {x}, \alpha\beta \rangle = -\langle {x}, 0 \rangle = 0,$$ a contradiction.

\end{exmpl}

\subsection{The Companion Polynomial} \label{sec:companion}
Let $A$ be a Cayley--Dickson algebra over $F$, and $f(x)\in A[x]$. 
Recall that when $\dim A \leq 8$, every root of $f(x)$ is a root of $C_f(x)$, and every root of $C_f(x)$ has an element quadratically equivalent to it. In \cite{CGVZ} it was pointed out that $f(x)=\alpha x \in \mathbb{S}[x]$ has $\beta$ as a root even though $\beta$ is not a root of $C_f(x)=2x^2$ (where $\alpha$ and $\beta$ are as in Example \ref{exmp1}). 
The second part of the property stated in the first line of this paragraph has not been dealt with though, which brings up the following question. 

\begin{ques}
Is every root of $C_f(x)$ quadratically equivalent to some root of $f(x)$, even when $\dim A \geq 16$?
\end{ques}

The answer is in general no:
\begin{exmpl}
In example \ref{Octave} we showed that the polynomial $f(x)=\alpha x-\beta \in \mathbb{S}[x]$ has no roots, but its companion $C_f(x)=2x^2+2$ certainly does (all the elements of trace 0 and norm 1).
\end{exmpl}

\subsection{Isolated and Spherical Roots} \label{sec:isolated}

Another property of $f(x) \in A[x]$ when $\dim A \leq 8$ and its norm form is anisotropic is that every root of $f(x)$ is either isolated (i.e., it is the only element in the quadratic equivalence class that is a root of $f(x)$), or spherical (i.e., every element in the quadratic equivalence class of the root is also a root of $f(x)$).

\begin{ques}
When the norm form of $A$ is anisotropic, is every root of a polynomial $f(x)$ in $A[x]$ either spherical or isolated?
\end{ques}

This is not true when $\dim A \geq 16$:
\begin{exmpl}
The polynomial $f(x)=\alpha x \in \mathbb{S}[x]$ has both $\beta$ and $-\beta$ as quadratically equivalent roots, but $\alpha$ is not a root, even though $\alpha$ is quadratically equivalent to $\beta$.
\end{exmpl}

\subsection{Critical Points}

The Gauss--Lucas theorem states that the critical points of $f(x)\in \mathbb{C}[x]$ (i.e. the roots of the derivative $f'(x)$) are contained in the convex hull of the roots of $f(x)$.
In \cite{GhiloniPerotti:2018} it was shown that this does not extend to $f(x) \in \mathbb{H}[x]$ (here the derivative $f'(x)$ is defined formally).
In \cite[Theorem 4.2]{CGVZ} it was proven that the \emph{spherical} critical points of $f(x)$ (again, $f'(x)$ is defined formally) are contained in the convex hull of the roots of $C_f(x)$ for any $f(x) \in A[x]$ where $A$ is any real anisotropic Cayley--Dickson algebra.

\begin{ques} 
Are the spherical roots of $f'(x)$ contained in the convex hull of the roots of $f(x)$?
\end{ques}
We hereby show that this does not hold:
\begin{exmpl}
The polynomial $f(x)=\frac{1}{3}x^3+x+i \in \mathbb{H}[x]$ has three isolated roots, none of which are spherical. Indeed, the polynomial has three complex roots, but no real quadratic factors, i.e., no two roots are complex-conjugate. If $f(x)$ had a spherical root $\lambda$, it would have had exactly two distinct roots quadratically equivalent to $\lambda$ in $\mathbb{C}$ (see e.g.\ \cite[Theorem 2.2]{zhang1997}; or \cite[Corollary 3.8]{CGVZ} for general locally-complex Cayley--Dickson algebras), and these must be complex-conjugate, leading to a contradiction. Note however, that $f'(x)=x^2+1$ does have spherical roots, most of which are not contained in the convex hull of the roots of $f(x)$, which are all complex.
\end{exmpl}

\section{Roots and Left Eigenvalues}\label{sec:eigen}

When $A$ is a non-commutative algebra, the notion of eigenvalues of a matrix $B$ with coefficients in $A$ divides into two categories: left eigenvalues $\lambda$ which satisfy $B \vec{v}=\lambda \vec{v}$ for some nonzero vector $\vec{v}$, and right eigenvalues which satisfy $B \vec{v}=\vec{v} \lambda$ (in either case we say that $\vec{v}$ is the eigenvector associated to the eigenvalue $\lambda$).
We focus here on left eigenvalues.
The motivating observation is that $\lambda$ is a left eigenvalue of $B \in M_n(A)$ if and only if there exists a nonzero column vector $\vec{v}$ in $A^{n}$ (the direct product of $n$ copies of $A$) for which $(B-\lambda I)\vec{v}=0$, which means $B-\lambda I$ defines (by multiplication from the left) a singular linear endomorphism of $A^{n}$ as an $F$-vector space. 

In the associative case, right eigenvalues are well-understood. In \cite{Lee:1949} it was shown that over $\mathbb{H}$ they can be easily found through the embedding of $M_n(\mathbb{H})$ into $M_{2n}(\mathbb{C})$, and this method was generalized in \cite{ChapmanMachen:2017} to any associative central division algebra.
When $A$ is a quaternion algebra, it was shown in \cite{HuangSo:2001} that the left eigenvalues of $2 \times 2$ matrices over $A$ can be found by solving a quadratic equation. The goal of this section is to do the analogous thing for octonion division algebras.
Note that the special case of $2 \times 2$ and $3 \times 3$ Hermitian matrices over $\mathbb{O}$ was studied in \cite{DrayMongue:2015}.

Let $A$ be an octonion division algebra over a field $F$.
Consider a matrix $B \in M_n(A)$.
Let $\sigma_L(B)$ denote the set of left eigenvalues of $B$, and let 
\[\Sigma_L(B) = \{(\lambda,\vec{v}) : \lambda\in A, \vec{0} \neq \vec{v} \in A^n, B\vec{v}=\lambda \vec{v}\}.\]

\begin{lem} \label{shiftedeigen}
	Let $\lambda$ be a left eigenvalue of a matrix $B\in{M_n(A)}$ and $\vec{v} \in A^n$ an associated eigenvector, where $A$ is an octonion division algebra over a field $F$. Then for any $0\ne{e}\in{A}$, the element $e\lambda$ is an eigenvalue of the matrix $eB$ with an associated eigenvector $\vec{v}e$.
\end{lem}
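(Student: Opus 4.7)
The strategy is a direct componentwise computation, exploiting the Moufang identities that hold in every alternative algebra, and in particular in every octonion algebra. The key identity is the middle Moufang law $(xy)(zx) = x(yz)x$, where the right-hand side is unambiguous thanks to flexibility.

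First I would rewrite the desired relation $(eB)(\vec{v}e) = (e\lambda)(\vec{v}e)$ coordinate by coordinate: for each index $i$, what needs to be verified is
\[
\sum_{j} (eB_{ij})(v_j e) \;=\; (e\lambda)(v_i e).
\]
On each summand of the left-hand side, apply the middle Moufang identity with $x=e$, $y=B_{ij}$, $z=v_j$, which gives $(eB_{ij})(v_j e) = e(B_{ij}v_j)e$. Since left and right multiplication by a fixed element are $F$-linear, the sum collapses to $e\bigl(\sum_j B_{ij}v_j\bigr)e = e(\lambda v_i)e$, where in the last step I invoke the hypothesis $B\vec{v}=\lambda\vec{v}$. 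A single further application of the same Moufang identity, this time with $y=\lambda$ and $z=v_i$, turns the right-hand side into $e(\lambda v_i)e$ as well, so the two sides agree.

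It remains to check that the candidate eigenvector $\vec{v}e$ is actually nonzero, so that $e\lambda$ qualifies as an eigenvalue rather than a vacuous solution. Since $\vec{v}\neq\vec{0}$, some component $v_i$ is nonzero, and then $v_i e \neq 0$ because $A$ is a division algebra and $e\neq 0$; hence $\vec{v}e\neq\vec{0}$. This is the only place where the hypotheses $e\neq 0$ and ``$A$ is a division algebra'' are used.

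The only potential obstacle is ensuring the Moufang identity legitimately applies: the expressions $(eB_{ij})(v_j e)$ and $(e\lambda)(v_i e)$ each involve three elements of $A$, so the non-associativity of $A$ genuinely matters. However, $A$ being alternative (a standard feature of octonion algebras) is exactly what is needed for the middle Moufang law, and no further subtlety arises.
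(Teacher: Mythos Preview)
Your proof is correct and follows essentially the same approach as the paper: both apply the middle Moufang identity $(xy)(zx)=x(yz)x$ to pass from $(eB)(\vec{v}e)$ to $e(B\vec{v})e=e(\lambda\vec{v})e$ and back to $(e\lambda)(\vec{v}e)$. Your version is more explicit in working componentwise and in verifying that $\vec{v}e\neq\vec{0}$, a detail the paper omits.
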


\begin{proof}
	The algebra $A$ satisfies the Moufang identity $(xy)(zx)=x(yz)x$. Therefore
	for any $e \in A$ we have $(eB)(\vec{v}e)=e(B \vec{v})e = e(\lambda\vec{v})e = (e\lambda)(\vec{v}e)$.
\end{proof}

\begin{lem}\label{triangular}
	Let $B\in{M_n(A)}$ be a lower (upper) triangular matrix with diagonal $(a_1,\ldots,a_n)$, where $A$ is any (not necessarily associative) division algebra over a field $F$. Then $\sigma_L(B) = \{a_1,\ldots,a_n\}$. 
\end{lem}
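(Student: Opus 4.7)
The plan is to prove the two inclusions $\sigma_L(B)\subseteq\{a_1,\dots,a_n\}$ and $\{a_1,\dots,a_n\}\subseteq\sigma_L(B)$ separately, using only one algebraic property of the (possibly non-associative) division algebra $A$: for every $0\neq a\in A$, left multiplication $L_a:x\mapsto ax$ is a bijection, so the equation $ax=y$ has a unique solution for any $y\in A$. I will write the argument for a lower triangular $B$; the upper triangular case is the mirror image.

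For the inclusion $\sigma_L(B)\subseteq\{a_1,\dots,a_n\}$ I would induct on $n$. Suppose $B\vec{v}=\lambda\vec{v}$ with $\vec{v}=(v_1,\dots,v_n)\neq 0$. The first row of the equation reads $a_1v_1=\lambda v_1$, i.e.\ $(a_1-\lambda)v_1=0$, so either $\lambda=a_1$ (and we are done), or $v_1=0$. In the latter case the remaining tuple $(v_2,\dots,v_n)$ is nonzero and is a left eigenvector with eigenvalue $\lambda$ of the lower-right $(n-1)\times(n-1)$ block, which is again lower triangular with diagonal $(a_2,\dots,a_n)$, so the induction hypothesis yields $\lambda\in\{a_2,\dots,a_n\}$.

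For the reverse inclusion, fix $a_i$ and let $i^{*}$ be the largest index with $a_{i^{*}}=a_i$. I propose to construct an eigenvector $\vec{v}$ explicitly by setting $v_j=0$ for $j<i^{*}$, $v_{i^{*}}=1$, and then, for $k=i^{*}+1,\dots,n$ in order, defining $v_k$ to be the unique solution of
\[
(a_k-a_{i^{*}})v_k=-b_{k\,i^{*}}-\sum_{j=i^{*}+1}^{k-1}b_{kj}v_j.
\]
This solution exists and is unique since $a_k-a_{i^{*}}\neq 0$ by maximality of $i^{*}$, and left multiplication by this nonzero element is a bijection. A direct verification using only the bilinearity identity $(a_k-a_{i^{*}})v_k=a_kv_k-a_{i^{*}}v_k$ shows that rows $k<i^{*}$ and row $i^{*}$ of $B\vec{v}=a_i\vec{v}$ hold trivially (both sides are $0$, respectively $a_{i^{*}}$), while rows $k>i^{*}$ are exactly the recurrence defining $v_k$. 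Since $v_{i^{*}}=1\neq 0$, the vector $\vec{v}$ is nonzero, so $a_i\in\sigma_L(B)$.

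The only thing to be careful about is the lack of associativity and the fact that $A$ need not be commutative: the argument is designed so that we never manipulate products of three or more elements, and we only use $(a+b)c=ac+bc$ together with the bijectivity of $L_a$ for $a\neq 0$. Both properties hold in any (not necessarily associative) division algebra, so this is a mild rather than a genuine obstacle.
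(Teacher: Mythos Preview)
Your proof is correct, and your argument for $\sigma_L(B)\subseteq\{a_1,\dots,a_n\}$ is the same row-by-row reduction the paper uses (you phrase it as induction, they iterate, but the content is identical).

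For the reverse inclusion your route differs from the paper's, and in fact yours is the one that actually works. The paper asserts that the standard basis vector $\vec{e}_i$ is an eigenvector for $a_i$, but for a genuinely non-diagonal lower triangular $B$ this is false: $B\vec{e}_i$ is the $i$-th column of $B$, which has nonzero entries below position $i$ in general. Your construction---choosing the largest $i^{*}$ with $a_{i^{*}}=a_i$, setting $v_{i^{*}}=1$, and solving $(a_k-a_{i^{*}})v_k=\cdots$ recursively for $k>i^{*}$---is the correct fix, and it is precisely where the division-algebra hypothesis (bijectivity of $L_{a_k-a_{i^{*}}}$) is actually used. The care you take to avoid products of three or more elements is exactly what is needed to keep associativity out of the picture.
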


\begin{proof}
	A direct elementary proof when $A$ is a field does not use associativity or commutativity, so transfers to our case. To summarize, it is clear that $\sigma_L(B) \supseteq \{a_1,\ldots,a_n\}$, since it is easy to see that for each $a_i$ for  $1\le{i}\le{n}$, an associated eigenvector is $\vec{e_i} =  (0,\ldots,1,\ldots 0)^T$ with $1$ in the $i$-th position (Here and in the rest of the article $\vec{v}^T$ denotes the transpose of the vector $\vec{v}$). In the other direction, let $\lambda \in \sigma_L(B)$, and $\vec{v} = (v_1,\ldots,v_n)^T \in A^n$ be an associated eigenvector. The first row of $B$ produces the equation $a_1v_1 = \lambda v_1$ so either $\lambda = a_1$ or $v_1=0$. If $v_1=0$ we continue to the second row to obtain $a_2v_2 = \lambda v_2$ implying either $\lambda = a_2$ or $v_2=0$. Continuing by induction we prove that either $\lambda\in \{a_1,\ldots,a_n\}$ or $\vec{v}=0$, where the latter is a contradiction. 
\end{proof}

For a given $f(x) \in A[x]$,  let $R(f(x))$ be the set of roots of $f(x)$ in $A$.
We denote by $LMR(f(x))$ the union of $R(c\cdot f(x))$ where $c$ ranges over all nonzero elements of $A$.

\begin{lem} \label{specialeigenvector}
	Let $B=\left( \begin{array}{lr}
		a & b\\
		c & d
	\end{array}\right)$ be a matrix defined over an alternative algebra $A$. A vector of the form  $\vec{v} = \vectwo{1}{s} \in A^2$ is an eigenvector associated to an eigenvalue $\lambda \in \sigma_L(B)$ if and only if $s$ is a root of the quadratic polynomial $f(x)=b x^2+(a-d)x-c \in A[x]$ and $\lambda = a+bs$. 
\end{lem}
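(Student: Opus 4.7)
The plan is a straightforward two-way unpacking of the eigenvalue equation $B\vec v = \lambda\vec v$ for $\vec v = (1,s)^T$, where the only subtle point is that products of the form $(bs)s$ need to be handled carefully, since $A$ is not assumed associative. I would write $B\vec v = \lambda \vec v$ componentwise, obtaining the two equations
\begin{equation*}
a + bs = \lambda, \qquad c + ds = \lambda s.
\end{equation*}
The first equation immediately identifies $\lambda = a + bs$, which is half of the desired conclusion and holds in any algebra.

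For the second equation, substitute $\lambda = a+bs$ to get $c + ds = (a+bs)s = as + (bs)s$. Here I would invoke alternativity: by Artin's theorem, the subalgebra of $A$ generated by the two elements $b$ and $s$ is associative, so $(bs)s = b(ss) = bs^2$. Thus the second equation becomes $c + ds = as + bs^2$, which rearranges to $bs^2 + (a-d)s - c = 0$, i.e. $f(s) = 0$. This direction establishes that if $(1,s)^T$ is an eigenvector with eigenvalue $\lambda$, then $s$ is a root of $f$ and $\lambda = a+bs$.

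The converse goes by reversing these steps: assuming $f(s) = 0$ and $\lambda = a + bs$, one checks the first component of $B\vec v - \lambda \vec v$ is $a + bs - \lambda = 0$ by definition of $\lambda$, and the second component is $c + ds - \lambda s = c + ds - (a+bs)s = c + ds - as - bs^2$, which equals $-f(s) = 0$ again using $(bs)s = bs^2$ from alternativity. No genuine obstacle appears; the only place where care is needed is to flag the use of alternativity (equivalently, Artin's theorem on two-generated subalgebras) in replacing $(bs)s$ by $bs^2$, without which the computation would not close up to the stated quadratic $f(x)$.
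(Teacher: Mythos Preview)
Your proof is correct and matches the paper's approach essentially line for line: write the eigenvalue equation componentwise, use the first component to identify $\lambda = a+bs$, substitute into the second, and invoke alternativity to rewrite $(bs)s$ as $bs^2$. The only cosmetic difference is that you appeal to Artin's theorem, whereas the paper simply cites the right alternative identity $(bs)s = bs^2$ directly (which is one of the defining laws of an alternative algebra, so Artin is not actually needed here).
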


\begin{proof}
The equation $B\vec{v} = \lambda\vec{v}$ induces the following system of equations:

\begin{equation}
	\left\{
	\begin{array}{l}
		a+bs = \lambda \\
		c+ds = \lambda s \\
	\end{array}
	\right.
\end{equation}

Substituting $\lambda = a+bs$ in the second equation, and using alternative identity $(bs)s = bs^2$ we obtain $f(s)= b s^2+(a-d)s-c = 0$, as required. The other direction follows readily by taking a root $s$ of $f(x)$, setting $\lambda=a+bs$ and comparing $B\vec{v}$ and  $\lambda\vec{v}$.
\end{proof}

\begin{lem} \label{lem:new-moufang}
	Let $M$ be a Moufang loop, and $x,y,z \in M$. Then $x((x^{-1}y)z) = (y(zx))x^{-1}$.
\end{lem}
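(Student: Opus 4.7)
The plan is to apply the middle Moufang identity $(x(ab))x = (xa)(bx)$ with a judicious choice of $a$ and $b$, and then to cancel a factor of $x$ on the right.

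First I would recall the two-sided inverse property enjoyed by every Moufang loop: for all $x,y \in M$ one has $x(x^{-1}y) = y$ and $(yx)x^{-1} = y$. These are standard consequences of the Moufang identities together with diassociativity, and they are what will let us pass from an equation of the form $Ax = B$ to $A = Bx^{-1}$.

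Next I would specialize the middle Moufang identity $(x(ab))x = (xa)(bx)$ by setting $a = x^{-1}y$ and $b = z$. On the right-hand side, the left inverse property collapses $x(x^{-1}y)$ to $y$, so the product becomes $y(zx)$. On the left-hand side we simply get $(x((x^{-1}y)z))x$. This yields the intermediate identity
\begin{equation*}
(x((x^{-1}y)z))x \;=\; y(zx).
\end{equation*}

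Finally, I would multiply both sides on the right by $x^{-1}$ and invoke the right inverse property $(wx)x^{-1} = w$ with $w = x((x^{-1}y)z)$ to obtain $x((x^{-1}y)z) = (y(zx))x^{-1}$, as desired. I do not anticipate a genuine obstacle here: the only real choice is picking the correct variant of the Moufang identities, and the only subtlety is that the final cancellation on the right must be justified, which is exactly what the inverse property in a Moufang loop provides.
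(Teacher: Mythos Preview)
Your proof is correct and follows essentially the same route as the paper: both hinge on a single application of the middle Moufang identity $(xa)(bx)=x(ab)x$ with $a=x^{-1}y$, $b=z$, together with the inverse property to remove the extra factor of $x$. The only cosmetic difference is that the paper first substitutes $t=x^{-1}y$ and justifies the cancellation $(x(tz)x)x^{-1}=x(tz)$ via a second Moufang identity (equivalently, diassociativity of $\{x,tz\}$), whereas you invoke the left and right inverse properties directly; the underlying argument is the same.
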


\begin{proof}
Let $t=x^{-1}y$. Then $y=xt$ and  
\[(y(zx))x^{-1} = ((xt)(zx))x^{-1} = (x(tz)x)x^{-1} = x((tz)(xx^{-1})) = x(tz)=x((x^{-1}y)z).\]
The second equality follows from the Moufang identity $(xt)(zx) = x(tz)x$, and the third by the Moufang identity $(zxz)y = z(x(zy))$ (or the fact that $x$ and $tz$ form a subgroup of $M$).
\end{proof}

\begin{thm}\label{main}
Given a matrix $B=\left( \begin{array}{lr}
a & b\\
c & d
\end{array}\right)$ over a division octonion algebra $A$, if $b=0$ then $\sigma_L(B)=\{a,d\}$, and if $b\neq 0$ then
\[ \Sigma_L(B)=\left\{\left(\lambda,\left(\begin{array}{r} 1 \\ s \end{array} \right) \cdot t\right) : t\in A^\times, s\in R(t^{-1}f(x)), \lambda=a+t((t^{-1} b)s)\right\}, \]
where $f(x)=b x^2+(a-d)x-c.$
 In particular, when $b \in F^\times$, we have
\[ \sigma_L(B)=\{\lambda : b^{-1}(\lambda-a) \in LMR(f(x))\}. \]
\end{thm}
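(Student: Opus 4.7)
The plan is to split on the value of $b$. The case $b=0$ is immediate from Lemma \ref{triangular}, since then $B$ is lower triangular with diagonal $(a,d)$. For $b\neq 0$, the strategy is to reduce every eigenvector to the normalized form $(1,s)^T$ already treated by Lemma \ref{specialeigenvector}, using Lemma \ref{shiftedeigen} as the conjugation device.

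In detail, suppose $b\neq 0$ and $(\lambda,\vec v)\in \Sigma_L(B)$ with $\vec v = (v_1,v_2)^T$. The first row of $B\vec v = \lambda\vec v$ reads $(\lambda - a)v_1 = bv_2$, so $v_1 = 0$ would force $v_2 = 0$ by the division algebra assumption; hence $v_1 \in A^\times$. Setting $t := v_1$ and $s := v_2 t^{-1}$, alternativity gives $st = v_2$, so $\vec v = (1,s)^T \cdot t$. I then apply Lemma \ref{shiftedeigen} with $e = t^{-1}$, and again with $e = t$ for the converse direction, to establish that $(\lambda,\vec v)\in \Sigma_L(B)$ if and only if $(t^{-1}\lambda,\,(1,s)^T) \in \Sigma_L(t^{-1}B)$; entrywise alternativity identifies $\vec v \cdot t^{-1}$ with $(1,s)^T$ and $t(t^{-1}B)$ with $B$. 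The matrix $t^{-1}B$ has associated quadratic polynomial (in the sense of Lemma \ref{specialeigenvector}) equal to
\[
(t^{-1}b)x^2 + (t^{-1}a - t^{-1}d)x - t^{-1}c \;=\; t^{-1}f(x).
\]
Hence Lemma \ref{specialeigenvector} yields $s \in R(t^{-1}f(x))$ together with $t^{-1}\lambda = t^{-1}a + (t^{-1}b)s$; applying $t\cdot$ to the latter and using $t(t^{-1}x) = x$ recovers $\lambda = a + t((t^{-1}b)s)$, which is the asserted formula.

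For the specialization to $b\in F^\times$, centrality of $b$ allows it to migrate freely through brackets, so $t((t^{-1}b)s) = bs$. Thus $\lambda = a + bs$ for some $s \in R(t^{-1}f(x))$ with $t\in A^\times$; as $t$ ranges over $A^\times$ so does $c := t^{-1}$, and the union of the $R(c\cdot f(x))$ over such $c$ is by definition $LMR(f(x))$. Rearranging $\lambda = a + bs$ to $b^{-1}(\lambda-a) = s$ yields the stated characterization of $\sigma_L(B)$.

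The main obstacle is bookkeeping around non-associativity. The identity $t(t^{-1}B) = B$ holds only entrywise (via alternativity), and Lemma \ref{shiftedeigen} packages exactly the Moufang identity needed for the ``conjugation'' $\vec v \mapsto \vec v \cdot t^{-1}$ to be compatible with passing between $B$ and $t^{-1}B$. The bracketing in $t((t^{-1}b)s)$ genuinely resists further simplification in general; this is precisely why the formula takes this slightly awkward form and collapses to the familiar $a + bs$ only when $b$ is central.
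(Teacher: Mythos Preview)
Your proof is correct and follows essentially the same approach as the paper: the case $b=0$ via Lemma~\ref{triangular}, and for $b\neq 0$ normalizing eigenvectors through Lemma~\ref{shiftedeigen} and then invoking Lemma~\ref{specialeigenvector}. Your version is in fact slightly more direct, since you obtain $\lambda = a + t((t^{-1}b)s)$ immediately by left-multiplying $t^{-1}\lambda = t^{-1}a + (t^{-1}b)s$ by $t$, whereas the paper first passes through the intermediate form $\lambda = a + (bv_2)v_1^{-1}$ and then appeals to Lemma~\ref{lem:new-moufang} to rewrite it.
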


\begin{proof}
When $b=0$, the result follows from Lemma~\ref{triangular}, so we assume that $b\ne{0}$. Now let $\lambda \in \sigma_L(B)$, and $\vec{v} = \vectwo{v_1}{v_2} \in A^2$ be an associated eigenvector. If $v_1=0$ then from $B\vec{v}=\lambda\vec{v}$ we get $b v_2=0$, so $v_2=0$, a contradiction.
Therefore, if $\vec{v} = \vectwo{v_1}{v_2}$ is an eigenvector, then $v_1 \neq 0$.
By Lemma~\ref{shiftedeigen}, we get that $v_1^{-1} \lambda$ is a left eigenvalue of the matrix $v_1^{-1} B$ with associated eigenvector $\vec{v}v_1^{-1} = \vectwo{1}{v_2v_1^{-1}}$. 


By Lemma~\ref{specialeigenvector} we get that $v_2v_1^{-1}$ is a root of $v_1^{-1}f(x)$, where $f(x)=bx^2+(a-d)x-c$, with $v_1^{-1} \lambda = v_1^{-1}a+(v_1^{-1}b)(v_2v_1^{-1})$. Using the Moufang identity $(xy)(zx)=x(yz)x$ on the last equality, we get $v_1^{-1} \lambda = v_1^{-1}a+v_1^{-1}(bv_2)v_1^{-1}$. Multiplying by $v_1$ on the left, we get $\lambda = a+(bv_2)v_1^{-1}$ (Here we used the inverse identity $x^{-1}(xy) = y$ for $x\ne{0}$). Summing up, we know that $v_2v_1^{-1} \in LMR(f(x))$, and so 
\begin{equation} \label{sigma-contained}
 \Sigma_L(B)\subseteq\left\{\left(a+(bv_2)v_1^{-1}, \left(\begin{array}{r} v_1 \\ v_2 \end{array} \right) \right) : v_1\ne 0, v_2v_1^{-1} \in R(v_1^{-1}f(x))\right\}.
\end{equation}
Now let $\left(a+(bv_2)v_1^{-1}, \left(\begin{array}{r} v_1 \\ v_2 \end{array} \right) \right)$ be a pair in the set on the right hand side of \eqref{sigma-contained}. By $v_2v_1^{-1} \in R(v_1^{-1}f(x))$ and Lemma~\ref{specialeigenvector} we get that $\vectwo{1}{v_2v_1^{-1}}$ is an eigenvector of the matrix $v_1^{-1}B$ associated to the eigenvalue $v_1^{-1}a+(v_1^{-1}b)(v_2v_1^{-1})$. Thus by  Lemma~\ref{shiftedeigen} we know $\left(\begin{array}{r} v_1 \\ v_2 \end{array} \right)$ is an eigenvector of the matrix $B$ associated to the eigenvalue $a+(bv_2)v_1^{-1}$ (we used the Moufang identity $(xy)(zx)=x(yz)x$ to simplify the eigenvalue to this form), so  $\left(a+(bv_2)v_1^{-1}, \left(\begin{array}{r} v_1 \\ v_2 \end{array} \right) \right) \in \Sigma_L(B)$. We have thus proved that 
\begin{equation*}
	\Sigma_L(B)=\left\{\left(a+(bv_2)v_1^{-1}, \left(\begin{array}{r} v_1 \\ v_2 \end{array} \right) \right) : v_1\ne 0, v_2v_1^{-1} \in R(v_1^{-1}f(x))\right\}.
\end{equation*}

Setting $t=v_1$ and $s=v_2v_1^{-1}$, we obtain (here we used the identity $x((x^{-1}y)z) = (y(zx))x^{-1}$ from Lemma~\ref{lem:new-moufang})
\[\Sigma_L(B)=\left\{\left(\lambda,\left(\begin{array}{r} 1 \\ s \end{array} \right) \cdot t\right) : t\in A^\times, s\in R(t^{-1}f(x)), \lambda=a+t((t^{-1} b)s)\right\}.\]

When $b \in F^\times$, $\lambda$ depends only on the value of $s$ and not of $t$, and thus
$\sigma_L(B)=\{\lambda : b^{-1}(\lambda-a) \in LMR(f(x))\}$.
\end{proof}

It is known that any matrix with quaternion coefficients has a left eigenvalue, see~\cite{wood1985}. At least for $2\times{2}$ matrices we can say the same for octonion matrices.

\begin{cor}
	Let $B\in{M_2(\mathbb{O)}}$. Then $\sigma_L(B) \ne \emptyset$, i.e.\ the $2\times 2$ matrix $B$ always has a left eigenvalue.
\end{cor}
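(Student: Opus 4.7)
The plan is to read off the statement directly from Theorem~\ref{main} together with Proposition~\ref{Oalgclosed}. Write $B=\left(\begin{smallmatrix} a & b \\ c & d \end{smallmatrix}\right)$ and split into cases depending on whether $b=0$.

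In the trivial case $b=0$, the matrix $B$ is lower triangular, so Lemma~\ref{triangular} (or equivalently the first clause of Theorem~\ref{main}) gives $\sigma_L(B)=\{a,d\}$, which is non-empty.

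In the main case $b\neq 0$, I would invoke Theorem~\ref{main}, which reduces the existence of a left eigenvalue to finding a pair $(t,s)$ with $t\in A^{\times}$ and $s\in R(t^{-1}f(x))$, where $f(x)=bx^{2}+(a-d)x-c\in\mathbb{O}[x]$. Choose $t=1$: the problem then collapses to producing a root of $f(x)$ itself. Since $b\neq 0$, the polynomial $f(x)$ has degree exactly $2$, so Proposition~\ref{Oalgclosed} supplies a root $s\in\mathbb{O}$. The resulting eigenvalue is $\lambda=a+bs$, witnessed by the eigenvector $(1,s)^{T}$.

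There is essentially no obstacle beyond checking that the chosen $f(x)$ really has positive degree so that Proposition~\ref{Oalgclosed} applies; the hypothesis $b\neq 0$ guarantees this. One small sanity point worth mentioning in the write-up is that in the octonion case the extra parameter $t$ appearing in Theorem~\ref{main} is unnecessary for mere existence (it matters only for enumerating all eigenvectors), since algebraic closure of $\mathbb{O}$ already handles $f(x)$ directly without needing to pre-multiply by a scalar from $LMR$.
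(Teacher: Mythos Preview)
Your proof is correct and follows exactly the paper's approach: split on $b=0$ via Theorem~\ref{main}/Lemma~\ref{triangular}, and for $b\neq 0$ set $t=1$ and apply Proposition~\ref{Oalgclosed} to $f(x)$ to obtain the eigenpair $(a+bs,(1,s)^{T})$. The only difference is that you spell out the $b=0$ case and the degree check explicitly, which the paper leaves implicit.
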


\begin{proof}
Setting $t=1$ in the theorem, and using the fact that $\mathbb{O}$ is ``algebraically closed" (see Proposition~\ref{Oalgclosed}), there always exists an eigenvector of the form $\vectwo{1}{s}$ for some $s\in\mathbb{O}$ associated to the eigenvalue $a+bs$. 
\end{proof}

\begin{cor}\label{zero}
Suppose $b \neq 0$.
When $d=0$, we have $0 \in \sigma_L(B)$ if and only if $c=0$.
When $d\neq 0$, we have $0 \in \sigma_L(B)$ if and only if there exists a nonzero $t \in A$ satisfying $d^{-1}(ct)-b^{-1}(at)=0$.
\end{cor}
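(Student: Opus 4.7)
The plan is to work directly from the definition of a left eigenvalue rather than unpacking Theorem~\ref{main} for $\lambda=0$. Namely, $0 \in \sigma_L(B)$ means there exists a nonzero vector $\vec{v}=(v_1,v_2)^T \in A^2$ with $B\vec{v}=\vec{0}$; equivalently, $av_1+bv_2=0$ and $cv_1+dv_2=0$. The first step is to observe that $v_1\ne 0$ automatically: if $v_1=0$, the first equation becomes $bv_2=0$, which forces $v_2=0$ because $b\ne 0$ and $A$ is a division algebra, contradicting $\vec{v}\ne\vec{0}$. Set $t=v_1\in A^\times$.

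For the case $d=0$, the second equation reduces to $ct=0$, and since $t\ne 0$ the division property forces $c=0$. Conversely, when $c=0$ I would exhibit the eigenvector $\vec{v}=(1,-b^{-1}a)^T$: by the alternative inverse identity $x(x^{-1}y)=y$, one has $a+b(-b^{-1}a) = a-a=0$, while $cv_1+dv_2=0$ holds trivially.

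For the case $d\ne 0$, I would apply the inverse identity $x^{-1}(xy)=y$, valid in any alternative algebra, to each of the two equations: the second yields $v_2=-d^{-1}(ct)$ and the first yields $v_2=-b^{-1}(at)$. Equating these two expressions for $v_2$ gives exactly the asserted relation $d^{-1}(ct)-b^{-1}(at)=0$. Conversely, given a nonzero $t$ satisfying this equation, I would define $v_2 := -b^{-1}(at) = -d^{-1}(ct)$ and $\vec{v}=(t,v_2)^T$; both rows of $B\vec{v}$ then vanish by the same inverse identities $b(b^{-1}x)=x$ and $d(d^{-1}x)=x$.

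The main obstacle, modest as it is, is bookkeeping with non-associativity: each cancellation (of $b$ or $d$ on the left, and the rewriting of $b(b^{-1}(at))$ as $at$) must be justified as a genuine identity in an alternative algebra rather than as an associative simplification. Since $A$ is octonion and hence alternative, the two inverse identities $x^{-1}(xy)=y$ and $x(x^{-1}y)=y$ are sufficient, so no appeal to Moufang identities or to Theorem~\ref{main} is needed.
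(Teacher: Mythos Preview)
Your proof is correct and is in fact considerably more direct than the argument given in the paper. The paper derives the corollary by specialising Theorem~\ref{main}: it parametrises eigenvectors as $(1,s)^T\cdot t$, rewrites the eigenvalue condition as $s=b'^{-1}(\lambda'-a')$ with $a'=t^{-1}a$, $b'=t^{-1}b$, etc., substitutes this into the quadratic $b's^2+(a'-d')s-c'=0$, and reads off the constant term $d'(b'^{-1}a')-c'$ at $\lambda'=0$. To convert that condition into the stated form $d^{-1}(ct)-b^{-1}(at)=0$, the paper must invoke the inversion formula $(t^{-1}b)^{-1}=b^{-1}t$ and then a genuine Moufang identity, namely $((xy)z)y=x(yzy)$, after right-multiplication by $t$. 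Your route bypasses all of this: you work straight from the system $av_1+bv_2=0$, $cv_1+dv_2=0$, and the only nonassociative facts you need are the alternative inverse identities $x^{-1}(xy)=y$ and $x(x^{-1}y)=y$. The paper's approach has the merit of illustrating how Theorem~\ref{main} is meant to be used in practice; yours has the merit of showing that the corollary is logically independent of that theorem and holds in any alternative division algebra with no appeal to Moufang-type identities.
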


\begin{proof}
 
 Let $\left(\lambda,\left(\begin{array}{r} 1 \\ s \end{array} \right) \cdot t\right) \in \Sigma_L(B)$. By Theorem~\ref{main} we get the two conditions $s\in R(t^{-1}f(x))$ and $\lambda = a+t((t^{-1}b)s)$. 
 
 Setting $ a'=t^{-1}a, b'=t^{-1}b, c'=t^{-1}c, d'=t^{-1}d, 
 \lambda' = t^{-1}\lambda$, the conditions become $b' s^2+(a'-d')s-c' = 0$ and $s=b'^{-1}(\lambda'-a')$. Plugging in $s=b'^{-1}(\lambda'-a')$ into the equation $b' s^2+(a'-d')s-c' = 0$ we obtain a two-sided (non-standard) polynomial equation in $\lambda'$ whose constant term (i.e., the coefficient obtained by setting $\lambda'=0$) is $d'(b'^{-1} a')-c'$.

Assume $d=0$. Then the constant term described above is equal to $-c'$, which is zero if and only if $c=0$. Therefore $0\in \sigma_L(B)$ if and only if $c=0$.

When $d\neq 0$, the constant term is $0$ if and only if  $b'^{-1} a'-d'^{-1} c' = 0$.
Therefore, $0 \in \sigma_L(B)$ if and only if $(b^{-1} t)(t^{-1} a)-(d^{-1} t)(t^{-1} c)=0$.
By multiplying from the right by $t$ and applying the Moufang identity $((xy)z)y=x(yzy)$, the last equation becomes $d^{-1}(ct)-b^{-1}(at)=0$, as required.
\end{proof}

\begin{exmpl}

Consider the matrix $B=\left(\begin{array}{lr}
i & 1\\
ij & j
\end{array}\right)$ over the real octonion division algebra $\mathbb{O}$ with standard generators $i,j,\ell$.
This matrix is defined over the quaternion subalgebra $\mathbb{H}$, and it is invertible as a quaternion matrix, with inverse $B^{-1}=-\frac{1}{2}\left(\begin{array}{lr}
	i & ij\\
	-1 & j
\end{array}\right)$ (as in the complex case, if a quaternion matrix has a one sided inverse, then it has a two-sided inverse; see~\cite[Proposition 4.1]{zhang1997}), and therefore 0 is not in its spectrum.
However, for $t=\ell$ we have $d^{-1}(c t)-b^{-1}(a t)=j^{-1}((ij) \ell)-1^{-1}(i\ell)=0$, so by Corollary \ref{zero} we get $0 \in \sigma_L(B)$.
A suitable eigenvector is $\vec{w}=\left(\begin{array}{lr}
-\ell \\
i\ell
\end{array}\right)$. Note that its right multiple $\vec{w}\cdot \ell^{-1}=\left(\begin{array}{lr}
-1 \\
i
\end{array}\right)$ is not an eigenvector of $B$ anymore, unlike the quaternionic case (where the set of eigenvectors associated to a left eigenvalue is closed under multiplication by a scalar to the right).
Therefore, this matrix, as a matrix in $M_2(\mathbb{O})$ is both invertible and a zero divisor.
Moreover, it is a product of two matrices $B=\left(\begin{array}{lr}
i & 0\\
0 & ij
\end{array}\right)\cdot
\left(\begin{array}{lr}
1 & -i\\
1 & i
\end{array}\right)$
where neither is a zero divisor. In fact, the left matrix only has $i,ij$ in its spectrum by Lemma~\ref{triangular} and the right one does not have 0 in its spectrum by Corollary \ref{zero}.

In order to compute the left eigenvalues of $B$, we shall consider the polynomial $f(x)=x^2+(i-j)x-ij$, and then $\sigma_L(B)=LMR(f(x))+i$.
By \cite[Remark 3.5]{ChapmanVishkautsan:2022}, 
\begin{align*}
LMR(f(x)) &= \{\xi\cdot (i-j)^{-1}(ij+1)+(1-\xi)(ij+1)(i-j)^{-1}+z\ell : 0\leq \xi\leq 1,  \\
&\qquad \norm(z) = \xi\cdot (1-\xi)\cdot \norm([ij-1,(i-j)^{-1}]\} \\
&= \{\xi j+(1-\xi)\cdot (-i)+z\ell : 0\leq \xi \leq 1, \norm(z)=2\xi(1-\xi)\}.
\end{align*}
In particular, for $\xi=0$, we get $-i \in LMR(f(x))$. Thus $0\in LMR(f(x))+i=\sigma_L(B)$.
\end{exmpl}

The arguments presented in the proof of Theorem~\ref{main} can also be applied to an associative division algebra $D$ to describe the left eigenvalues of any $2\times 2$ matrix over $D$. Indeed, since the nonzero elements of an associative division algebra form a Moufang loop with respect to its product,   Theorem~\ref{main} and the preceding lemmas are applicable. Since the algebra is associative, the expression $a+t(t^{-1}b)s$ appearing in Theorem \ref{main} becomes $a+s$, and thus:
\begin{cor}
Given an associative division algebra $D$ and a matrix $B=\left( \begin{array}{lr}
a & b\\
c & d
\end{array}\right)$ over $D$, if $b=0$ then $\sigma_L(B)=\{a,d\}$ and if $b\neq 0$, then 
\[\sigma_L(B)=\{\lambda\in D : b^{-1}(\lambda-a) \in R(bx^2+(a-d)x-c)\}.\] 
\end{cor}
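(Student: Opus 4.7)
The plan is to reduce directly to Theorem~\ref{main} and then collapse its conclusion using associativity together with the absence of zero divisors. First, for the degenerate case $b=0$, I would appeal to Lemma~\ref{triangular}, which is already stated for any (not necessarily associative) division algebra, giving $\sigma_L(B)=\{a,d\}$ at once. For the remainder I would assume $b\neq 0$.

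The second step is to check that the machinery supporting Theorem~\ref{main} transfers to $D$. The nonzero elements of $D$ form a group under multiplication, hence in particular a Moufang loop, so Lemmas~\ref{shiftedeigen}, \ref{specialeigenvector}, and \ref{lem:new-moufang} all apply verbatim in $D$ — indeed each Moufang identity and the inverse identity $x^{-1}(xy)=y$ used in those proofs becomes a triviality under associativity. Consequently Theorem~\ref{main} itself applies, and it yields
\[\Sigma_L(B) = \left\{\left(\lambda,\vectwo{1}{s}\cdot t\right) : t \in D^\times,\ s \in R(t^{-1}f(x)),\ \lambda = a + t((t^{-1}b)s)\right\},\]
where $f(x)=bx^2+(a-d)x-c$.

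The final step is to collapse this description. By associativity, $t((t^{-1}b)s) = bs$, so $\lambda$ does not depend on $t$. Moreover, since $D$ is a division algebra, a nonzero scalar multiple of $f(x)$ has the same roots as $f(x)$: if $t^{-1}f(s)=0$ with $t^{-1}\ne 0$, then $f(s)=0$. Thus the parameter $t$ drops out entirely, and $\lambda \in \sigma_L(B)$ if and only if there exists $s \in R(f(x))$ with $\lambda = a+bs$, i.e., $b^{-1}(\lambda-a) \in R(f(x))$, which is the stated formula. Put differently, in the associative setting $LMR(f(x))$ collapses to $R(f(x))$, and the special-case formula at the end of Theorem~\ref{main} is valid for any nonzero $b \in D$. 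The only point that requires any care — and the closest thing to an obstacle — is verifying that each Moufang or inverse identity invoked in the octonion proof is a tautology under associativity; once that is noted, the corollary is immediate.
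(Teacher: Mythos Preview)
Your proposal is correct and follows essentially the same route as the paper: observe that $D^\times$ is a group and hence a Moufang loop, so Theorem~\ref{main} and its supporting lemmas apply verbatim, and then use associativity to collapse $a+t((t^{-1}b)s)$ to $a+bs$. You are in fact slightly more explicit than the paper's one-paragraph justification, spelling out both the $b=0$ case via Lemma~\ref{triangular} and the reason $R(t^{-1}f(x))=R(f(x))$ in a division algebra, which the paper leaves implicit.
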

We should mention here that Theorem 2.3 in  \cite{HuangSo:2001} for quaternion matrices is of the form of the corollary, and the proof there works for associative division algebras as well. 

\section*{Acknowledgements}

The authors wish to express their sincere gratitude to the three anonymous referees who read the manuscript carefully and provided many helpful comments and suggestions.


\EditInfo{November 29, 2023}{February 9, 2024}{Ivan Kaygorodov and David Towers}
\end{document}